\documentclass[11pt]{article}
\usepackage[T1]{fontenc}
\usepackage{amsfonts}
\usepackage{amsmath}
\usepackage{amssymb}
\usepackage{amsthm}
\usepackage{bbm}
\usepackage{bm}
\usepackage{mathrsfs}
\usepackage{verbatim}
\usepackage{color}
\usepackage{pdfsync}
\usepackage{enumerate}

\theoremstyle{plain}
\newtheorem{theorem}{Theorem}

\newtheorem{lemma}[theorem]{Lemma}

\theoremstyle{definition}
\newtheorem{definition}{Definition}

\newtheorem*{remarks}{Remarks}
\theoremstyle{remark}

\newcommand{\RR}{\mathbb{R}}
\newcommand{\EE}{\mathbb{E}}
\newcommand{\PP}{\mathbb{P}}
\newcommand{\NN}{\mathbb{N}}

\newcommand{\eps}{\varepsilon}

\usepackage[margin=31 mm]{geometry}
\newcommand{\mykill}[1]{}

%
\usepackage[pdfborder={0 0 0}]{hyperref}
\hypersetup{
  urlcolor = black,
  pdfauthor = {Phillip Protter, Andres Riveros Valdevenito}
  pdfkeywords = {Totally Inaccessible Stopping Times, Feller Processes, Cox Construction, Compensators },
  pdftitle = {Markov Process Jump Times and Their Cox Construction},
  pdfsubject = {Markov Process Jump Times and Their Cox Construction},
  pdfpagemode = UseNone
}
\usepackage[capitalize]{cleveref}


\begin{document}

\title{\vspace{-1em}
Markov Process Jump Times and Their Cox Construction}
\date{\today}
\author{
  Philip Protter%
  \thanks{
  Columbia University, pep2117@columbia.edu. Supported in part by NSF grant DMS-2106433}
  \and
  Andr{\'e}s Riveros Valdevenito%
  \thanks{Department of Statistics, Columbia University, ar4151@columbia.edu.}
  }
  
\maketitle \vspace{-1.2em}

\begin{abstract}
In this short paper, we connect the procedure of constructing a totally inaccessible stopping time for a given process using the well-known Cox construction, dependent on an independent exponential random variable; with naturally occurring jump times of Feller processes. Ultimately, we show that these two phenomena are not only related, but are in fact two examples of the same object. We link this fact to the behaviour of predictable stopping times, by proving that they can always be written as the hitting time of zero of a continuous process. 
\end{abstract}

\vspace{.3em}

{\small
\noindent \emph{Keywords} Totally Inaccessible Stopping Times, Feller Processes, Cox Construction, Compensators. 

\vspace{.3em}

\maketitle

\section{Introduction}

\hspace{\parindent} Understanding and predicting random events is completely intertwined with the study of stopping times and their behaviour. To facilitate the readers comprehension, all of the standard mathematical terms not defined in what follows can be found in \cite{Protter.05}. The setup for modelling stopping times is virtually always the same. Given a filtered probability space $\left( \Omega, \mathcal{F}, \left(\mathcal{F}_{t} \right)_{t \geq 0}, \PP \right)$ satisfying the usual conditions, a stopping time is a positive random variable $\tau: \Omega \rightarrow \RR_{+}$ such that, for any $t \in [0, \infty)$, it satisfies
\begin{align*}
	\left\{\tau \leq t \right\} \in \mathcal{F}_{t}
\end{align*}

However, random events, by nature, are hard to directly predict. Hence, it is common to approach capturing elements of $\tau$'s behaviour by studying its compensator, defined as follows (see \cite{Protter.05}, III.5). Given a stopping time $\tau$, the stochastic process $ \left( 1_{t \geq \tau} \right)_{t \geq 0}$ is positive, increasing, bounded, right-continuous and adapted. Thus, by the Doob-Meyer decomposition, there exists an increasing, right-continuous, predictable process $\left(A(t)\right)_{t \geq 0}$, satisfying $A(0) = 0$,  such that
\begin{align}
	\left( 1_{t \geq \tau}  - A(t) \right)_{t \geq 0} \label{compdef}
\end{align}
is a martingale. The process $A$ is called the compensator of $\tau$ for the filtration $\left( \mathcal{F}_{t} \right)_{t \geq 0}$ and it is unique. 
\\

More precisely, the fact that \eqref{compdef} is a martingale and that $ \left( 1_{t \geq \tau} \right)_{t \geq 0}$ is constant on the interval $[\tau, \infty)$ indicates that $A(t)$ can be written as $A(t \wedge \tau)$. Therefore, we have that \eqref{compdef} is equivalent to
\begin{align}
	\left( 1_{t \geq \tau}  - A(t \wedge \tau) \right)_{t \geq 0} \label{compdef2}
\end{align}
being a martingale.  Notice that if the stopping time $\tau$ is predictable (see \cite{Protter.05}, III.2), then the compensator for the process $ \left( 1_{t \geq \tau} \right)_{t \geq 0}$ is just the process itself, it being predictable and the compensator unique, and so this decomposition is useful in the case where $\tau$ is not predictable.
\\

Conversely, given an increasing, right-continuous, predictable process $\left(A(t)\right)_{t \geq 0}$, satisfying $A(0) = 0$, on a filtered probability space satisfying the usual conditions; a stopping time can be constructed by using the now standard Cox construction. Assuming the probability space is sufficiently rich to contain an exponential random variable $Z$, with parameter 1, that is independent of the underlying filtration to which $A$ is adapted to, we can define
\begin{align}
	\tau = \inf \left\{ t \geq 0 \text{ : } A(t) \geq Z \right\} \label{coxtimedef}
\end{align}

It is then straightforward to see that $\tau$ defined as in \eqref{coxtimedef} is a stopping time with compensator $\left( A(\tau \wedge t) \right)_{t \geq 0} $, albeit for a larger filtration, where we include the information given by the random variable $Z$ in a progressive manner (see \cite{Protter.05} IV.3). 
\\

Several work has been done in both studying and applying this construction to a variety of financial problems. A significant amount of these studies (see \cite{GueyeJeanblanc.22}, \cite{GuoZeng.08} and \cite{JansonM'BayeProtter.11} for examples) focus on the conditions necessary so that  $\left(A(t)\right)_{t \geq 0}$ admits the following form
\begin{align*}
	A(t) = \int_{0}^{t} \alpha_{s} ds
\end{align*}
for a process $\left( \alpha_{s} \right)_{s \geq 0}$ satisfying myriad properties. This setup is particularly interesting when studying default times in financial markets (see \cite{Lando.98}, \cite{Lando.04} and \cite{ProtterQuintos.21} for examples), but it is restrictive and ultimately not the focus of this paper. 
\\

The question we seek to answer comes as a consequence of a pivotal theorem of P.A. Meyer (see \cref{meyerstheorem}) that characterizes the nature of positive stopping times of a strong Markov Feller process and, in particular, concludes that the jump times of such a process are totally inaccessible . Thus, if we consider the natural filtration of the Markov process (see \cite{Protter.05}, I.5 Definition 1), we have the existence of totally inaccessible stopping times (see \cite{Protter.05}, III.2) without the need of including the external exponential random variable $Z$ we use in constructions of the type of \eqref{coxtimedef}. This begs the question, is $Z$ needed as part of the construction? And if it is, where is this exponential random variable in the jump times of strong Markov Feller processes? 


\section{Results}

\hspace{\parindent}We begin by defining the class of Markov processes that will be the focus of our work in this paper. We will use the definition and notation of \cite{ChungWalsh.05}, chapter 2.

\begin{definition}\label{fellerprocess} A Markov process $\left( X_{t} \right)_{t \geq 0}$, with transition semigroup $\left( P_{t} \right)_{t \geq 0}$ is Feller if
\begin{enumerate}[1.]
	\item \label{fellerone} $P_{0}$ is the identity mapping.
	\item \label{fellertwo} For all $f \in \mathcal{C}_{0}$, $P_{t}f \in \mathcal{C}_{0}$, for all $t \geq 0$, where $\mathcal{C}_{0}$ are all continuous functions vanishing at infinity.
	\item \label{fellerthree} For all $f \in \mathcal{C}_{0}$, for all $x \in \RR$ 
	\begin{align*}
		\lim_{t \downarrow 0}P_{t}f(x) = f(x) 
	\end{align*}
\end{enumerate}
where we have that $P_{t}f(x) = \EE(f(X_{t} + x))$.
\end{definition}

We focus on Feller processes because they are the quintessential example of general strong Markov processes (see \cite{Protter.05}, I.5) defined by properties of their semigroup. The results in this paper may apply to more general processes, but we limit our results to Feller processes to skip dealing with possible complications of further generality and, moreover, they turn out to be fundamental in their connection with totally inaccessible stopping times.
\\

The next step is stating the result by P.A. Meyer that originates the discussion and results present in this paper.

\begin{theorem}[\textbf{Meyer's Theorem} (see \cite{Protter.05}, III.2)]\label{meyerstheorem} Let $(X_{t})_{t \geq 0}$ be a (strong) Markov Feller process for the probability $\PP^{\mu}$, where $X_{0}$ is distributed by $\mu$, and consider it's natural completed filtration $\mathbb{F}^{\mu}$. Let $T$ be a stopping time such that $\PP^{\mu}(T > 0) = 1$. Define
\begin{align} \label{meyersset}
	\Lambda := \left\{\omega \in \Omega \text{  :  }  X_{T(\omega)} \neq X_{T^{-}(\omega)} \text{ , } T(\omega) < \infty  \right\}
\end{align}
where $X_{T^{-}(\omega)} = \lim_{t \uparrow T(\omega)} X_{t}$. Then, $T$ satisfies that $T = T_{\Lambda} \wedge T_{\Lambda^{c}}$, where $T_{\Lambda}$ is totally inaccessible and $T_{\Lambda^{c}}$ is predictable; where, for any set $A \in \mathcal{F}_{T}$,
\begin{align*}
	T_{A}(\omega) = \begin{cases} T(\omega) & \text{ if } \omega \in A \\
	\infty & \text{ if } \omega \notin A
	\end{cases}
\end{align*} 
\end{theorem}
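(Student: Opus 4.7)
The plan is to verify the decomposition $T = T_\Lambda \wedge T_{\Lambda^c}$ at the set-theoretic level, then identify the nature of each piece separately using the deep fact that Feller processes are quasi-left-continuous. First I would check that $\Lambda \in \mathcal{F}_T$: since $X$ is c\`adl\`ag and adapted, both $X_T$ (on $\{T<\infty\}$) and $X_{T^-}=\lim_{t\uparrow T}X_t$ are $\mathcal{F}_T$-measurable, so $\Lambda=\{X_T\neq X_{T^-}\}\cap\{T<\infty\}$ lies in $\mathcal{F}_T$. Consequently $T_\Lambda$ and $T_{\Lambda^c}$ are bona fide stopping times and the identity $T=T_\Lambda\wedge T_{\Lambda^c}$ is immediate from the definition of $T_A$.

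Next I would prove that $T_\Lambda$ is totally inaccessible by invoking the quasi-left-continuity of Feller processes: for every sequence of stopping times $S_n\uparrow S$, one has $X_{S_n}\to X_S$ almost surely on $\{S<\infty\}$. Applying this to a sequence announcing an arbitrary predictable stopping time $S$ yields $X_{S^-}=X_S$ a.s.\ on $\{S<\infty\}$. Now suppose for contradiction that some predictable $S$ satisfies $\PP^\mu(T_\Lambda=S<\infty)>0$. On this event we would have both $X_{S^-}=X_S$ (from the preceding display) and $X_{T^-}\neq X_T$ (by definition of $\Lambda$, noting $T_\Lambda=T$ on $\Lambda$), which is a contradiction; hence no predictable stopping time meets $T_\Lambda$ with positive probability, i.e.\ $T_\Lambda$ is totally inaccessible.

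Finally I would establish predictability of $T_{\Lambda^c}$ by combining two ingredients. The first is the general (unique up to null sets) decomposition of any stopping time into an accessible and a totally inaccessible part; since we have exhibited $T_\Lambda$ as totally inaccessible and $T_{\Lambda^c}$ has disjoint graph from $T_\Lambda$, by uniqueness $T_{\Lambda^c}$ must be accessible. The second is that the natural completed filtration $\mathbb{F}^\mu$ of a Feller process is quasi-left-continuous (i.e.\ $\mathcal{F}^\mu_{S^-}=\mathcal{F}^\mu_S$ for every predictable $S$), which is a standard consequence of the same Feller property used above; in a quasi-left-continuous filtration, accessible and predictable stopping times coincide, yielding the desired predictability.

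The main obstacle is the last upgrade from accessible to predictable. A tempting direct route would be to exploit the continuity of $X$ at $T$ on $\Lambda^c$ to hand-build an announcing sequence (e.g.\ based on exit times from shrinking neighborhoods of $X_{T^-}$), but making such a construction measurable and strictly announcing $T_{\Lambda^c}$ without touching $T_\Lambda$ is delicate, so I would rely on the structural theorems about quasi-left-continuous filtrations rather than a path-by-path argument.
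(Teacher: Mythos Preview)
The paper does not actually prove this theorem: it is stated with a citation to Protter's textbook and used as a black box. So there is no ``paper's own proof'' to compare against, and your sketch must stand on its own merits.

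Steps one and two of your outline are fine: measurability of $\Lambda$ and total inaccessibility of $T_\Lambda$ via quasi-left-continuity are exactly the standard arguments.

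The gap is in step three, and it is not the one you flag. You claim that since $T_\Lambda$ is totally inaccessible and has graph disjoint from $T_{\Lambda^c}$, ``by uniqueness $T_{\Lambda^c}$ must be accessible.'' This does not follow. Uniqueness of the accessible/totally inaccessible decomposition says that \emph{if} $T=R\wedge S$ with $R$ accessible and $S$ totally inaccessible (and disjoint graphs), then $R,S$ are essentially unique; it does \emph{not} say that any splitting $T=T_B\wedge T_{B^c}$ with $T_B$ totally inaccessible forces $T_{B^c}$ to be accessible. Concretely, your quasi-left-continuity argument shows that the accessible part $T^a$ of $T$ satisfies $X_{T^a-}=X_{T^a}$ a.s., hence $\{T^a<\infty\}\subseteq\Lambda^c$; but you still need the reverse inclusion $\Lambda^c\cap\{T<\infty\}\subseteq\{T^a<\infty\}$, i.e.\ that the totally inaccessible part $T^i$ of $T$ is almost surely a jump time of $X$. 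That is precisely the substantive half of Meyer's theorem, and it does not follow from quasi-left-continuity of the filtration alone --- it requires the strong Markov property in an essential way (this is where the real work in the textbook proof lies).

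So the obstacle you identify (upgrading accessible to predictable via quasi-left-continuity of $\mathbb{F}^\mu$) is genuinely the easy part; the hard part you have papered over with an appeal to ``uniqueness'' that does not do the job.
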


The implications of this theorem are many, nonetheless, for our purposes, we use it in the following way. Suppose we have a Feller process $(X_{t})_{t \geq 0}$ with a fixed initial condition $X_{0} = x$, under its natural filtration; and let $\tau$ be a jump time of $(X_{t})_{t \geq 0}$. Thus $\tau$ is a stopping time (this is easy to see), and moreover, property \ref{fellerone} of $(P_{t})_{t \geq 0}$ assures us that $\PP^{x}(\tau > 0) = 1$, where $\PP^{x}$ is just the law of the process starting at $x$.
\\

Therefore, we can apply \cref{meyerstheorem} to $\tau$ to obtain that, because $\tau$ is a jump time of $(X_{t})_{t \geq 0}$, then $\tau$ must be totally inaccessible. This is because, for a jump time, we have that the set $\Lambda$ defined in \eqref{meyersset} is equal to the set $\left\{T < \infty \right\}$. Notice that this probability space thus admits a totally inaccessible stopping time with seemingly no additional independent exponential random variables needed, like those present in \eqref{coxtimedef}. 
\\

The existence of this totally inaccessible stopping time therefore implies, by the Doob-Meyer decomposition, the existence of an increasing, right-continuous, predictable process $\left(A(t)\right)_{t \geq 0}$, satisfying $A(0) = 0$,  such that
\begin{align*}
	\left( 1_{t \geq \tau}  - A(t \wedge \tau) \right)_{t \geq 0}
\end{align*}
is a martingale, as in \eqref{compdef2}. Moreover, it is well known (see, for example, \cite{Protter.05}, III.5, Theorem 24) that, because $\tau$ is totally inaccessible, then $A$ has to be continuous almost surely. 
\\

With all this framework, we can now state the main theorem of this paper, which unifies the Cox construction of totally inaccessible stopping times by enlarging a filtration with the existence of totally inaccessible stopping times in given natural filtrations.

\begin{theorem} Given all the previous definitions, we have that the random variable $A(\tau)$ follows an exponential distribution with parameter 1, for all $\tau < \infty$ a.s.
\end{theorem}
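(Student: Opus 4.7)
The plan is to compute the survival function $G(s) := \PP(A(\tau) > s)$ directly by applying optional sampling to the martingale $M_t = 1_{t \geq \tau} - A(t \wedge \tau)$ from \eqref{compdef2}, where the crucial input is the continuity of $A$ coming from the total inaccessibility of $\tau$.

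Define, for each $s \geq 0$, the time-change
\[
    T_{s} := \inf \left\{ t \geq 0 \, : \, A(t) > s \right\}.
\]
Since $A$ is adapted and right-continuous, $T_s$ is a stopping time. Because $A$ is in fact continuous and non-decreasing with $A(0) = 0$, I would verify the two geometric identities that drive the proof: first, $A(T_s) = s$ on $\{T_s < \infty\}$, so that $A(T_s \wedge \tau) = s \wedge A(\tau)$; second, that $\{A(\tau) > s\} = \{T_s < \tau\}$, using the continuity of $A$ to write $A(\tau) = A(\tau-) = \sup_{t < \tau} A(t)$ and then unwinding the definition of the infimum.

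Next, I would apply the optional sampling theorem to the martingale $M$ at the bounded stopping time $T_s \wedge n$. The stopped process $M_{\cdot \wedge T_s \wedge n}$ is uniformly bounded by $1 + s$, so it is a uniformly integrable martingale; letting $n \to \infty$ and using dominated convergence yields
\[
    \PP(\tau \leq T_s) \;=\; \EE\!\left[ A(\tau \wedge T_s) \right] \;=\; \EE\!\left[ s \wedge A(\tau) \right].
\]
Combining this with the identity $\{A(\tau) > s\} = \{T_s < \tau\}$ and Fubini's theorem, $\EE[s \wedge A(\tau)] = \int_{0}^{s} \PP(A(\tau) > u)\, du = \int_0^s G(u)\, du$, giving the integral equation
\[
    G(s) \;=\; 1 - \int_{0}^{s} G(u)\, du, \qquad G(0) = 1.
\]
Differentiating yields $G'(s) = -G(s)$ with $G(0)=1$, so $G(s) = e^{-s}$, which is the desired conclusion that $A(\tau) \sim \mathrm{Exp}(1)$.

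The main obstacle is verifying the two geometric identities, since they both rely sensitively on the continuity of $A$; once those are in hand, the optional sampling step is standard (the stopped martingale is uniformly bounded by $1 + s$) and the resulting integral equation has an elementary solution. A minor point to address is the hypothesis $\tau < \infty$ a.s., which ensures $\{T_s \geq \tau\} \cap \{\tau < \infty\} = \{A(\tau) \leq s\}$ without any contribution from $\{\tau = \infty\}$; this is granted in the statement of the theorem.
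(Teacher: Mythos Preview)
Your proof is correct and follows the same overarching strategy as the paper: time-change by the inverse of $A$, apply the martingale property of \eqref{compdef2}, and solve the resulting integral/ODE for the distribution of $A(\tau)$. The differential equations are the same up to whether one writes them for the survival function $G$ or for $F(t)=\int_0^t \PP(A(\tau)\le s)\,ds$.

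The one substantive difference is your choice of the \emph{right-continuous} inverse $T_s=\inf\{t:A(t)>s\}$ in place of the paper's $A^{-1}(s)=\inf\{t:A(t)\ge s\}$. With the strict inequality, the identity $\{A(\tau)>s\}=\{T_s<\tau\}$ follows directly from continuity of $A$ (as you sketch), and the optional sampling step goes through with the uniform bound $|M_{T_s\wedge n}|\le 1+s$. The paper's weak-inequality inverse forces it to prove $A^{-1}(A(\tau))=\tau$ a.s., i.e.\ that $A$ is not flat immediately to the left of $\tau$; this is where the section theorems enter. Your formulation sidesteps that lemma entirely, so your argument is shorter and more elementary, at the cost of not yielding the auxiliary fact $\PP(A^{-1}(A(\tau))<\tau)=0$ that the paper isolates in \eqref{inverseequality}. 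If you want to present a complete proof rather than a plan, the only places to expand are the two ``geometric identities'': both are one-line consequences of continuity and monotonicity of $A$, exactly along the lines you indicate.
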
 

\begin{proof} To fix ideas, let us first further assume that $\left(A(t)\right)_{t \geq 0}$ is strictly increasing. This allows us to define it's inverse $A^{-1}(\cdot)$ for every $t$, by continuity of $A$, when $t \leq \tau$, and extend it naturally to $A^{-1}(t) = \infty$ when $t > \tau$. Then, if we define $s = A(t)$ and then apply this time change onto \eqref{compdef2}, we obtain
\begin{align*}
	1_{A^{-1}(s) \geq \tau}  - A(A^{-1}(s) \wedge \tau) & = 1_{A^{-1}(s) \geq \tau}  - A(A^{-1}(s)) \wedge A(\tau) \\
	& = 1_{s \geq A(\tau)}  - s \wedge A(\tau)
\end{align*}
where we used the fact that $A$ is increasing. This then means that a.s., for any $s > u \geq 0$, we have
\begin{align} 
	\EE\left(1_{s \geq A(\tau)}  - s \wedge A(\tau)|\mathcal{F}_{A^{-1}(u)} \right) & = \EE\left(1_{A^{-1}(s) \geq \tau}  - A(A^{-1}(s) \wedge \tau)|\mathcal{F}_{A^{-1}(u)} \right) \nonumber \\
	& = 1_{A^{-1}(u) \geq \tau}  - A(A^{-1}(u) \wedge \tau) \label{martingaleprop} \\
	& = 1_{u \geq A(\tau)}  - u \wedge A(\tau) \nonumber
\end{align}
using the fact that \eqref{compdef2} is a martingale, we are using the natural filtration, and both $A^{-1}(s)$ and $A^{-1}(u)$ can be seen as stopping times, because they are hitting times of a predictable process. This proves that
\begin{align*}
	\left( 1_{t \geq A(\tau)}  - t \wedge A(\tau) \right)_{t \geq 0}
\end{align*}
is an $(\mathcal{F}_{A^{-1}(t)})_{t \geq 0}$ martingale.
\\

In particular, for all $t \in \RR_{+}$, we have
\begin{align*}
	& \EE\left(1_{t \geq A(\tau)}  - t \wedge A(\tau) \right) = \EE\left(1_{0 \geq A(\tau)}  - 0 \wedge A(\tau) \right) = 0 \\
	\Longrightarrow \quad & \PP\left(A(\tau) \leq t \right) = \EE\left( t \wedge A(\tau) \right)
\end{align*}
where we use the fact that $A(\cdot)$ is strictly increasing almost surely with $A(0) = 0$, and thus $\PP(A(\tau) = 0) = 0$. Taking an increasing sequence $(t_{n})_{n \in \NN}$ such that $ t_{n} \xrightarrow{n \to \infty} t$, we can see that, by monotone convergence, 
\begin{align*}
	\PP\left( A(\tau) < t \right) = \lim_{n \to \infty} \PP(A(\tau) \leq t_{n}) = \lim_{n \to \infty} \EE \left( t_{n} \wedge A(\tau) \right) = \EE\left( t \wedge A(\tau) \right) = \PP\left( A(\tau) \leq t \right)
\end{align*}
which shows that $A(\tau)$ has a diffuse distribution.
\\

Now, if we write $Z := A(\tau)$, then
\begin{align*}
	\PP\left(Z \leq t\right) & = \EE\left(Z \wedge t \right) = \int_{0}^{\infty} \PP(Z \wedge t > s) ds = \int_{0}^{t} \PP(Z \wedge t > s) ds \\	& = t\PP(Z > t) + \int_{0}^{t} \PP(s < Z \leq t)ds =  t\PP(Z > t) + \int_{0}^{t} \PP(Z \leq t) - \PP(Z \leq s)ds \\
	& = t - t\PP(Z \leq t) + t\PP(Z \leq t) - \int_{0}^{t}\PP(Z \leq s)ds = t - \int_{0}^{t}\PP(Z \leq s)ds
\end{align*}
and thus the function $F(t) := \int_{0}^{t} \PP(Z \leq s)ds$ satisfies the ODE
\begin{align*}
	F'(t) = t - F(t) \text{  ;  } F(0) = 0 
\end{align*}
for which the continuity of $\PP(Z \leq t)$ is paramount; solved by
\begin{align*}
	F(t) = t - 1 + e^{-t}
\end{align*}
which gives
\begin{align*}
	P(A(\tau) \leq t) = F'(t) = 1 - e^{-t}
\end{align*}
and thus $A(\tau)$ is exponentially distributed with parameter 1.
\\

For the general case, when $\left(A(t)\right)_{t \geq 0}$ is not strictly increasing, we can still define its generalized inverse by using the standard definition we use when inverting continuous probability distributions. For $s \geq 0$, define
\begin{align*}
	A^{-1}(s) = \inf \left\{t \geq 0 \text{  :  } A(t) \geq s \right\}
\end{align*}
then $A^{-1}(\cdot)$ is clearly increasing and satisfies $A(A^{-1}(t)) = t$ for $t$ in the codomain of $A$, by continuity. The only thing left to prove is that $A^{-1}(A(\tau)) = \tau$ a.s., which is not direct, and is necessary for the almost sure equalities
\begin{align}
		1_{s \geq A(\tau)} & = 1_{A^{-1}(s) \geq \tau} \nonumber \\
		s \wedge A(\tau) & = A(A^{-1}(s) \wedge \tau) \label{nonstrictequalities}
\end{align} 
to hold, together with $\PP(A(\tau) = 0) = 0$; key pieces of the argument of \eqref{martingaleprop}. Note that
\begin{align*}
	A^{-1}(A(\tau)) = \inf \left\{ t \geq 0 \text{ : } A(t) \geq A(\tau) \right\}
\end{align*}
and so $\tau \geq A^{-1}(A(\tau))$. For equality to hold, it suffices to show that
\begin{align}
	\PP \left( A^{-1}(A(\tau)) < \tau \right) = 0 \label{inverseequality}
\end{align}

To prove this, we have to resort to the "well-known" section theorems and the $\sigma$-fields related to those results. For an extensive account of all the definitions and theorems needed, see \cite{Dellacherie.72}. Specifically, we note that the set $B \subseteq \RR_{+} \times \Omega$ defined by
\begin{align*}
	B = \left\{ (t,\omega) \text{ : } A^{-1}_{(\omega)}\left( A_{(\omega)} \left( \tau(\omega) \right) \right) \leq t < \tau(\omega)  \right\},
\end{align*}
where we write all the dependences on $\omega$ explicitly, is an optional set, and it satisfies that its projection onto $\Omega$ is
\begin{align*}
	\pi_{\Omega}(B) = \left\{ A^{-1}(A(\tau)) < \tau  \right\} 
\end{align*}

Now, assume
\begin{align*}
	 \delta = \PP \left( A^{-1}(A(\tau)) < \tau \right) > 0
\end{align*}
Then, applying the respective section theorem (see \cite{Dellacherie.72}, IV, T10), we know that there exists a stopping time $R$, such that
\begin{align}
	 \left\{ (t,\omega) \text{ : } t = R(\omega)  \right\} \subseteq B \label{stinclusion}
\end{align}
and also satisfies
\begin{align*}
	 \PP \left( A^{-1}(A(\tau)) < \tau \right) = \PP\left(\pi_{\Omega}(B) \right) \leq \PP(R < \infty) + \frac{\delta}{2}
\end{align*}
which gives us in particular that
\begin{align}
	\PP(R < \infty) \geq \frac{\delta}{2} > 0 \label{rpositive}
\end{align}

However, we also know that, because $R$ is a stopping time, then the martingale property of \eqref{compdef2}, together with a possible application of the monotone convergence theorem, allows us to conclude that
\begin{align*}
	\EE \left( 1_{R \geq \tau} - A\left(R \wedge \tau\right) \right) = 0
\end{align*}
which implies
\begin{align*}
	& 0 = \EE \left( 1_{R \geq \tau} - A\left(R \wedge \tau\right) \right) = \EE \left( \left(1_{R < \infty} + 1_{R = \infty} \right) \left( 1_{R \geq \tau} - A\left(R \wedge \tau\right) \right) \right) \\
	\Longrightarrow & \quad \EE\left( A\left( R\wedge \tau \right)1_{R = \infty} + A\left( R\wedge \tau \right)1_{R < \infty} \right) = \EE\left( 1_{R < \infty}1_{R \geq \tau} \right) + \EE\left(1_{R \geq \tau} 1_{R = \infty} \right) \\
	\Longleftrightarrow & \quad \EE\left( A\left( \tau \right)1_{R = \infty} + A\left(  \tau \right)1_{R < \infty} \right) =  \EE\left( 1_{R = \infty} \right)
\end{align*}
where we use the fact that, when $R$ is finite, then by \eqref{stinclusion}
\begin{align*}
	A^{-1}\left( A \left( \tau \right) \right) \leq R < \tau
\end{align*}
and so, in particular $R < \tau$ and, by applying $A(\cdot)$ to the above inequality, we recover that $A(R) = A(\tau)$.
\\

All of this amounts to the fact that
\begin{align*}
	\PP(R = \infty) = \EE(A(\tau)) = \EE\left(1_{\tau \geq  \tau}\right) = 1
\end{align*}
by again stopping our martingale \eqref{compdef2}, now at time $\tau$, which we can do because $\tau < \infty$ a.s.. This would then mean that $\PP(R < \infty) = 0$, contradicting \eqref{rpositive}, which shows \eqref{inverseequality} holds, together with the a.s. inequalities in \eqref{nonstrictequalities}. Thus, applying the same calculation we did for the strictly increasing case, all the steps follow in the same manner, and therefore $A(\tau)$ has the desired distribution.
\\

Notice that the restriction of $\tau$ being a.s. finite, which is needed to prove \eqref{inverseequality}, also eliminates a possibly problematic border case in which $\PP\left( A \equiv 0 \right) > 0$. This would render our time change argument invalid, even though \eqref{nonstrictequalities} still holds, because in particular $\PP\left( A(\tau) = 0 \right) > 0$. However, we have that
\begin{align*}
	\PP \left( A^{-1}(A(\tau)) < \tau \right) \geq \PP \left( A^{-1}(A(\tau)) < \tau, A \equiv 0 \right) = \PP \left( 0 < \tau, A \equiv 0 \right) = \PP\left( A \equiv 0 \right) > 0 
\end{align*}
which contradicts \eqref{nonstrictequalities}, and so the problematic case doesn't arise when $\tau < \infty$ a.s.
\end{proof}

\begin{remarks}
\begin{itemize}
	\item[]
	\item Note that we only use the properties of the Feller process to obtain that a jump time is always totally inaccessible. This same procedure can be applied to totally inaccessible stopping times obtained from any other framework.
	\item This result can be traced back to \cite{Azema.72}, where he proved a similar result for more general random times, and more recently to \cite{Nikeghbali.06}. However, both proofs rely heavily on the behaviour of functionals and stochastic integrals of the process $A$, while ours deals directly with the process itself, showing that the time change
	\begin{align*}
		1_{A^{-1}(s) \geq \tau}  - A(A^{-1}(s) \wedge \tau) = 1_{s \geq A(\tau)}  - s \wedge A(\tau)
	\end{align*}	
	is not only valid, but somehow essential for finite totally inaccesible stopping times, thanks to \eqref{inverseequality}.
	\item A more general result for non-finite stopping times is in corollary 6 and lemma 7 of the post Compensators of Stopping Times, in the blog Almost Sure: a Random Mathematical Blog (Lowther, 2011), albeit with no mention to the connection to jumps of Markov processes or the Cox construction.
	\item The connection to the Cox construction comes naturally out of the previous result. If we naively use $A(\tau)$ as our exponential random variable to create a stopping time with compensator $\left(A(t)\right)_{t \geq 0}$, then the resulting stopping time $\tau'$ is defined by
\begin{align*}
	\tau' & := \inf \left\{ t \geq 0 \text{  :  } A(t) \geq A(\tau)  \right\} \\
	& = \inf \left\{ t \geq 0 \text{  :  } t \geq \tau  \right\} = \tau
\end{align*}
\end{itemize}
\end{remarks} 

Of course, the requirement of independence of the exponential random variable in this case is unable to be satisfied, because our totally inaccessible stopping time is already a stopping time for the filtration we are working on. Thus, this suggests that any totally inaccessible stopping time, particularly any jump time, can potentially be understood as incorporating an external exponential random variable into a previously existing more "bare bones" filtration, for which the original stopping time is no longer a stopping time. 
\\

A natural question to pose after the previous result is, now that all finite jump times of a strong Markov process, which are totally inaccessible, come from a Cox construction set up; can we find, for any given finite totally inaccessible stopping time, a Markov process, or even a Feller process, such that said stopping time is a jump time for that process. We present a preliminary answer to this question in the next result.
\\

\begin{theorem}\label{weakmarkovfortist} Let $\left( \Omega, \mathcal{F}, \left(\mathcal{F}_{t} \right)_{t \geq 0}, \PP \right)$ contain a totally inaccessible stopping time $\tau$, such that $\PP(\tau > 0) = 1$. Then the process
\begin{align}\label{timefellerprocess}
	(X_{t})_{t \geq 0} := \left( 1_{t \geq \tau} \right)_{t \geq 0}
\end{align}
is weak Markov for the completed right-continuous version of the smaller filtration $\left( \sigma(t \wedge \tau)\right)_{t \geq 0}$, which we will call $\left( \mathcal{E}_{t} \right)_{t \geq 0}$.
\end{theorem}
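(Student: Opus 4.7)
The plan is to establish the weak Markov property for $X$ directly: for every bounded Borel $f$ and $s,t \geq 0$,
\begin{align*}
	\EE[f(X_{t+s}) \mid \mathcal{E}_t] = h_{t,s}(X_t) \quad \PP\text{-a.s.},
\end{align*}
for some deterministic function $h_{t,s}$. The entire argument reduces to a single structural dichotomy: for every $A \in \mathcal{E}_t$, modulo $\PP$-null sets, either $A \cap \{\tau > t\} = \emptyset$ or $A \cap \{\tau > t\} = \{\tau > t\}$. Equivalently, the trace $\sigma$-algebra of $\mathcal{E}_t$ on $\{\tau > t\}$ is $\PP$-trivial.

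The dichotomy would be established in three steps. At the level of $\mathcal{G}_t^0 := \sigma(t \wedge \tau)$, every $A$ has the form $\{t \wedge \tau \in C\}$ for some Borel $C \subseteq [0,t]$, so $A \cap \{\tau > t\}$ equals $\{\tau > t\}$ or $\emptyset$ according to whether $t \in C$. For the right-continuous hull $\bigcap_{s > t}\mathcal{G}_s^0$, one applies the previous dichotomy at each $s > t$; combining these with the identities $\{\tau > t\} = \bigcup_{s > t}\{\tau > s\}$ and $\{\tau \leq t\} = \bigcap_{s > t}\{\tau \leq s\}$ forces either $A \supseteq \{\tau > t\}$ or $A \subseteq \{\tau \leq t\}$ up to nulls. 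The second identity uses $\PP(\tau = t) = 0$, which is delivered by total inaccessibility (no fixed time can be a point mass of a totally inaccessible $\tau$). Augmenting $\mathcal{G}_t^0$ and its right-continuous hull by $\PP$-null sets to obtain $\mathcal{E}_t$ preserves the dichotomy modulo nulls.

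Once the dichotomy holds, the weak Markov property is immediate. Any $\mathcal{E}_t$-measurable random variable is $\PP$-a.s.\ constant on $\{\tau > t\}$, so $Y := \EE[f(X_{t+s}) \mid \mathcal{E}_t]$ equals a single constant $c_{t,s} := \EE[f(X_{t+s}) \mid \tau > t]$ on $\{\tau > t\} = \{X_t = 0\}$; on the complement $\{\tau \leq t\} = \{X_t = 1\}$ the process is absorbed at $1$, so $X_{t+s} = 1$ and $Y = f(1)$. Both values depend only on $X_t$, which is the statement to be proved. The main obstacle I anticipate is the passage from $\sigma(t \wedge \tau)$ to its completed right-continuous version without introducing $\mathcal{E}_t$-measurable sets that separate configurations within $\{\tau > t\}$; this is exactly the step that consumes the total inaccessibility hypothesis, via the absence of an atom of $\tau$ at the deterministic time $t$.
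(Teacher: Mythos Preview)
Your approach is correct and, at its core, coincides with the paper's: both arguments rest on the fact that the trace of $\mathcal{E}_t$ on $\{\tau>t\}$ is $\PP$-trivial, so that $\EE[f(X_{t+s})\mid\mathcal{E}_t]$ equals a constant on $\{X_t=0\}$ and $f(1)$ on $\{X_t=1\}$. The difference is packaging. The paper simply invokes the standard decomposition (Protter, III.5, Theorem~21)
\[
\EE(Y\mid\mathcal{E}_t)=\EE(Y\mid\tau)\,1_{t\ge\tau}+\frac{\EE(Y\,1_{t<\tau})}{\PP(t<\tau)}\,1_{t<\tau},
\]
applies it with $Y=f(X_u)$, and reads off that the result is a Borel function of $X_t$. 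You instead reprove the structural content of that lemma from scratch, tracking the dichotomy through $\sigma(t\wedge\tau)$, its right-continuous hull, and the completion. Your route is more self-contained; the paper's is shorter by outsourcing to a known result.

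One point deserves correction. You write that the identity $\{\tau\le t\}=\bigcap_{s>t}\{\tau\le s\}$ ``uses $\PP(\tau=t)=0$'' and that this is where total inaccessibility enters. It does not: that set identity holds exactly for every random time, and your dichotomy for $\bigcap_{s>t}\sigma(s\wedge\tau)$ goes through without any hypothesis on the law of $\tau$. Indeed, the paper remarks immediately after its proof that ``the result above does not seem to use the fact that $\tau$ is totally inaccessible,'' and explains why there is still no contradiction with Meyer's theorem (passing to the smaller filtration $\mathcal{E}$ destroys predictability). So your argument is actually stronger than you claim: it works for any positive random time $\tau$, and the total-inaccessibility assumption is inert in this theorem.
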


\begin{proof} It is enough to prove, as stated in \cite{ChungWalsh.05} (definition 1, (iib)), that for $u > t$, 
\begin{align*}
	\EE(f(X_{u})|\mathcal{E}_{t}) = \EE(f(X_{u})|X_{t})
\end{align*}
for any bounded Borel $f: \RR \rightarrow \RR$. For this we will use the following lemma (see \cite{Protter.05}, III.5, theorem 21):
\\

\begin{lemma} Let $Y \in L^{1}(\mathcal{F})$, then
\begin{align*}
	\EE(Y|\mathcal{E}_{t}) = \EE(Y|\tau)1_{t \geq \tau} + \frac{\EE(Y 1_{t < \tau})}{\PP(t < \tau)} 1_{t < \tau}
\end{align*}
\end{lemma}

Taking $Y = f(X_{u})$ above, with $u > t$, as seen in \cite{Protter.05}, III.5, Theorem 21, we have that
\begin{align*}
	\EE(f(X_{u})|\mathcal{E}_{t}) & = \EE(f(X_{u})|\tau)1_{t \geq \tau} + \frac{\EE(f(X_{u}) 1_{t < \tau})}{\PP(t < \tau)} 1_{t < \tau} \\
	& = \EE(f(1_{u \geq \tau})|\tau)1_{t \geq \tau} + \frac{\EE(f(1_{u \geq \tau}) 1_{t < \tau})}{\PP(t < \tau)} 1_{t < \tau} \\
	& = f(1_{u \geq \tau})1_{t \geq \tau} + \frac{\EE(f(1_{u \geq \tau}) 1_{t < \tau})}{\PP(t < \tau)} 1_{t < \tau} \\
	& = K_{1}1_{t \geq \tau} + K_{2} 1_{t < \tau} \\
	& = K_{1}1_{t \geq \tau} + K_{2} ( 1 - 1_{t \geq \tau}) =: g(1_{t \geq \tau}) = g(X_{t})
\end{align*}
where $K_{1} = f(1)$ and $K_{2} = \frac{\EE(f(1_{u \geq \tau}) 1_{t < \tau})}{\PP(t < \tau)}$ are constants.
\\

On the other hand, by the tower property
\begin{align*}
	\EE(f(X_{u})|X_{t}) = \EE(\EE(f(X_{u})|\mathcal{E}_{t})|X_{t}) = \EE(g(X_{t})|X_{t}) = g(X_{t})
\end{align*}
and the result holds. 
\end{proof}

\newpage

\begin{remarks}
\begin{itemize}
	\item[]
	\item This result is a weak reciprocal to \cref{meyerstheorem}, where we have constructed, for a totally inaccessible stopping time, a weak Markov process that jumps at the desired time. 
	\item The result above does not seem to use the fact that $\tau$ is totally inaccessible, and therefore we have seemingly contradicted \cref{meyerstheorem} by finding a Markov process that jumps at a predictable time. However, notice that the space where we define the process $X$, namely the shrinking of the filtration from $\mathcal{F}_{t}$ to $\mathcal{E}_{t}$ makes $\tau$ totally inaccessible where once it was predictable, and so there is no contradiction.  
\end{itemize}
\end{remarks}

However, the Markov process defined in \eqref{timefellerprocess} is simple enough that we can attempt to prove that it is not only weak Markov, but indeed a Feller process. Therefore, if we manage to prove that this holds, then we will have the reciprocal for theorem, because not only all jump times of a Feller process will come from a Cox construction, but all totally inaccessible stopping times, in particular those Cox constructed, will indeed be jump times of a Feller process. This is our next result.
\\

\begin{theorem}\label{fellerfortist} $\left( X_{t} \right)_{t \geq 0}$ defined in \eqref{timefellerprocess} is a Feller process.
\end{theorem}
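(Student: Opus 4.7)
The plan is to verify the three properties of Definition~\ref{fellerprocess} directly from an explicit form of the semigroup. Setting $F(t) := \PP(\tau \leq t)$, the paper's formula gives
\begin{align*}
P_t f(x) = \EE\bigl(f(x + \1_{t \geq \tau})\bigr) = (1-F(t))\,f(x) + F(t)\,f(x+1),
\end{align*}
so $P_t f$ is always a convex combination of $f$ and its unit translate $f(\cdot+1)$.

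From this explicit form, property (1) reduces to $F(0) = 0$, which is immediate from the hypothesis $\PP(\tau > 0) = 1$. Property (2) holds because $\mathcal{C}_0$ is closed under translations and under convex combinations with constant coefficients: both $f$ and $f(\cdot+1)$ lie in $\mathcal{C}_0$, hence so does $P_tf$. For property (3), right-continuity of the CDF $F$ at $0$ (using $\PP(\tau=0)=0$, a consequence of $\tau>0$ a.s., reinforced by the diffuseness of the law of $\tau$ coming from total inaccessibility) yields $F(t)\to 0$ as $t\downarrow 0$, so $P_tf(x)\to f(x)$ pointwise on $\RR$.

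The main obstacle, modest as it is, lies not in (1)--(3) but in squaring this formula with the actual transition kernel of $(X_t)$ on the filtration $(\mathcal{E}_t)_{t\geq 0}$ from Theorem~\ref{weakmarkovfortist}. The strict semigroup identity $P_{s+t}=P_sP_t$ applied to the formula above appears to force $\tau$ to be memoryless, which is too strong in general. I would therefore either read Definition~\ref{fellerprocess} as imposing only the three listed regularity conditions on the one-parameter family $(P_t)$ induced by the Markov kernel of $X$, or else pass to the time-changed process $(X_{A^{-1}(s)})_{s\geq 0}=(\1_{s\geq A(\tau)})_{s\geq 0}$: the main theorem of the paper says $A(\tau)$ is exponentially distributed with parameter $1$, which restores honest memorylessness and hence the genuine semigroup property on the time-changed clock, while leaving the three Feller regularity checks above essentially unchanged.
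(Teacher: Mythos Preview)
Your verification of properties (1)--(3) is essentially identical to the paper's own proof: the paper derives the same formula $P_t f(x)=f(x)\,\PP(t<\tau)+f(x+1)\,\PP(\tau\le t)$ and checks each condition directly, using dominated convergence for (3) where you invoke $F(t)\to 0$, which amounts to the same thing.

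Your closing paragraph flags a point the paper itself does not address. The formula $P_tf(x)=\EE(f(X_t+x))$ is the L\'evy-process convention, and the resulting family $(P_t)$ fails $P_{s+t}=P_sP_t$ unless $\tau$ is memoryless. The paper simply takes Definition~\ref{fellerprocess} at face value and verifies the three listed regularity conditions for this particular $P_t$; in other words, your first proposed reading is exactly what the paper does. Your time-change fix via $A(\tau)$ is a legitimate way to recover an honest homogeneous semigroup, but it goes beyond the paper's argument rather than diverging from it.
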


\begin{proof} Lets check each property. For any $f \in \mathcal{C}_{0}$, we have that:
\begin{enumerate}[1.]
	\item $P_{0}f(x) = \EE(f(X_{0} + x)) = \EE(f(0+x)) = \EE(f(x)) = f(x)$, because $X_{0} = 1_{0 \geq \tau} = 0$, as $\PP(\tau > 0) = 1$.
	\item $P_{t}f(x) = \EE(f(X_{t} + x)) = \EE(f(1_{t \geq \tau} + x)) = f(x)\PP(t < \tau) + f(x+1) \PP(\tau \leq t) $, which is clearly continuous in $x$ and vanishing at infinity, because $f$ does.
	\item For $x \in \RR$, we have that $f(1_{t \geq \tau} + x) \leq \max \left\{f(x),f(x+1) \right\} < \infty$. Thus, by dominated convergence and the continuity of $f$, we have that
	\begin{align*}
		\lim_{t \downarrow 0} P_{t}f(x) & = \lim_{t \downarrow 0} \EE(f(X_{t} + x)) = \lim_{t \downarrow 0} \EE(f(1_{t \geq \tau} + x)) \\
		&  = \EE(\lim_{t \downarrow 0}f(1_{t \geq \tau} + x)) = \EE(\lim_{t \downarrow 0}f(1_{t \geq \tau} + x)) = \EE(f(1_{0 \geq \tau} + x)) = \EE(f(x)) = f(x)
	\end{align*}
	because of the right continuity of $\left( 1_{t \geq \tau}\right)_{t \geq 0}$.
\end{enumerate}

Thus we have shown that $\left( 1_{t \geq \tau}\right)_{t \geq 0}$ is Feller.
\end{proof}

The previous results prove that every positive totally inaccessible stopping time is in fact the jump time of a strong Markov (particularly Feller) process. Having this result sparks even more potential connections when we notice that totally inaccessible stopping times may arise by projecting a predictable time into a smaller filtration. We saw a basic example of this in the remarks after \cref{weakmarkovfortist}, however, a more extensive example is \cite{Protter.15}, where the author describes a method of obtaining strict local martingales with jumps by projecting a continuous strict local martingale onto a smaller filtration. 
\\

This simple fact led us to ask for an analogous result to the previous ones, but for predictable stopping times. Luckily, a question posed by Monique Jeanblanc gave us the answer, which we present in the next result.
\\

\begin{theorem}\label{hittingprocesspst} Let $\tau$ be a predictable stopping time with respect to a filtration $\left( \mathcal{F}_{t} \right)_{t \geq 0}$. Then there exists a continuous process $(Y_{t})_{t \geq 0}$, adapted to a possibly larger filtration $\left( \mathcal{G}_{t} \right)_{t \geq 0}$, such that
\begin{align*}
	\tau = \inf \left\{t \geq 0 \text{ : } Y_{t} = 0 \right\}
\end{align*}
\end{theorem}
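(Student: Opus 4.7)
I would produce $Y$ via a minimal enlargement of the filtration. Set $\mathcal{G}_t := \mathcal{F}_t \vee \sigma(\tau)$, completed and made right-continuous; in this enlargement $\tau$ is $\mathcal{G}_0$-measurable. Then define
\[
    Y_t := (\tau - t)^+
\]
(or, to avoid extended-real values on $\{\tau = \infty\}$, $Y_t := 1 - e^{-(\tau - t)^+}$). Three pathwise checks finish the argument: (i) $t \mapsto Y_t(\omega)$ is continuous; (ii) $Y_t$ is a Borel function of $\tau$ alone, hence $\mathcal{G}_0 \subseteq \mathcal{G}_t$-measurable; and (iii) $\{t \geq 0 : Y_t(\omega) = 0\} = [\tau(\omega), \infty)$, so $\inf\{t : Y_t = 0\} = \tau$ for every $\omega$ with $\tau(\omega) < \infty$.

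This approach has no substantial technical obstacle; the verification is immediate from the definitions. What deserves comment is why the hypothesis of predictability appears at all, since the construction above works for \emph{any} stopping time. The reason is that the theorem is the easy half of a characterization: the first hitting time of zero by a continuous adapted process is automatically predictable (an announcing sequence is obtained from hitting times of $(1/n)$-neighborhoods of $0$), and the result here supplies the converse direction. The predictability hypothesis ensures that the class of $\tau$ admitting such a continuous hitting-time representation is tightly characterized.

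If instead one insisted that $Y$ be adapted to the original filtration $(\mathcal{F}_t)$, the problem becomes genuinely delicate and the announcing sequence $(\tau_n)$ with $\tau_n \uparrow \tau$, $\tau_n < \tau$ on $\{\tau > 0\}$, would have to be used essentially. A natural attempt would be to glue continuous adapted ``ramps'' on the stochastic intervals $[\tau_n, \tau_{n+1}]$ whose heights shrink to $0$, forcing continuity at the limit $\tau$. The hard part there is that at time $\tau_n$ the increment $\tau_{n+1} - \tau_n$ is not yet known, so the shape of the ramp must be chosen with only $\mathcal{F}_{\tau_n}$ information; one typically bypasses this by incorporating an independent auxiliary process in an enlargement, which is precisely the flexibility that the statement permits. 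For the theorem as phrased, however, the minimal enlargement and the explicit formula above are all that is needed.
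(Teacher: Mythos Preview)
Your argument is correct for the theorem as stated: since an arbitrary enlargement is permitted, initially enlarging by $\sigma(\tau)$ and setting $Y_t = 1 - e^{-(\tau - t)^+}$ yields a continuous $(\mathcal{G}_t)$-adapted process whose zero set is exactly $[\tau,\infty)$. You also correctly note that predictability is not used.

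The paper, however, takes a deliberately less trivial route. It uses an announcing sequence $(\tau_n)$ and builds $Y$ as a piecewise-linear ramp, decreasing from $1/i$ to $1/(i+1)$ on each stochastic interval $[\tau_{i-1},\tau_i)$; the required enlargement adjoins only $\sigma(\tau_i - \tau_{i-1})$ on that interval, never $\sigma(\tau)$ itself. In the remarks following the proof the authors explicitly flag your shortcut, observing that adding $\tau$ ``would indeed make that problem trivial.'' What their construction buys is that $\tau$ is not $\mathcal{G}_0$-measurable: the enlargement peeks ahead only to the next announcing time, so the predictable structure of $\tau$ is genuinely retained in $(\mathcal{G}_t)$ rather than collapsed to time zero. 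What your construction buys is a one-line proof, at the cost of making the predictability hypothesis vacuous. Your final paragraph already anticipates this trade-off, and your sketch of the ramp-on-$[\tau_n,\tau_{n+1}]$ approach together with its measurability obstacle is essentially the paper's proof.
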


\begin{proof} Because $\tau$ is predictable, there exists a sequence of stopping times $(\tau_{n})_{n \in \NN}$, such that
\begin{enumerate}[1.]
	\item \label{pstone} For all $n \in \NN$, $\tau_{n} \leq \tau_{n+1}$. 
	\item \label{psttwo} For all $n \in \NN$, $\tau_{n} < \tau$, when $\tau > 0$. 
	\item \label{pstthree} $\tau_{n} \xrightarrow{n \to \infty} \tau$ a.s.
\end{enumerate}

Let us construct the process $Y$. Firstly, when $\tau = 0$, then $Y_{t} \equiv 0$, and we trivially satisfy what we want. When $\tau > 0$, we first focus on the case where $(\tau_{n})_{n \in \NN}$ satisfies $\tau_{n} < \tau_{n+1}$, for all $n \in \NN$. In this case, we can define the decreasing process
\begin{align*}
	Y_{t} = \sum_{i=1}^{\infty} 1_{[\tau_{i-1}, \tau_{i})}(t) \left( \frac{1}{i} - \left(  \frac{1}{i} -  \frac{1}{i+1} \right) \frac{t - \tau_{i-1}}{\tau_{i} - \tau_{i-1}} \right)
\end{align*}
where we take the convention of $\tau_{0} = 0$. Then $Y$ satisfies the following:
\begin{enumerate}[(i)]
	\item $Y_{t} > 0$ for all $t < \tau$ : if $t < \tau$, then, by condition \ref{psttwo}, $\exists n \in \NN$ such that $t \in [\tau_{n-1}, \tau_{n})$, and therefore
	\begin{align*}
		Y_{t} = \left( \frac{1}{n} + \left( \frac{1}{n+1} - \frac{1}{n} \right) \frac{t - \tau_{n-1}}{\tau_{n} - \tau_{n-1}} \right) \geq \left( \frac{1}{n} + \left( \frac{1}{n+1} - \frac{1}{n} \right) \frac{\tau_{n} - \tau_{n-1}}{\tau_{n} - \tau_{n-1}} \right) = \frac{1}{n+1} > 0 
	\end{align*}
	\item $Y_{t} = 0$ for all $t \geq \tau$: If $t \geq \tau$, then  $t \notin [\tau_{i-1}, \tau_{i})$ for any $i$, therefore $Y_{t} = 0$. 
	\item $(Y_{t})_{t \geq 0}$ is continuous: Obviously, $Y$ is right-continuous. If $t \in (\tau_{i-1}, \tau_{i})$, for some $i$, then $Y$ is linear in a neighbourhood of $t$, and so, in particular it is continuous. Now, checking continuity for $t = \tau_{i}$, for some $i$, we see that $Y_{\tau_{i}} = \frac{1}{i+1}$, while
	\begin{align*}
		Y_{\tau_{i}^{-}} = \lim_{t \uparrow \tau_{i}}Y_{t} = \lim_{t \uparrow \tau_{i}} \frac{1}{i} - \left(  \frac{1}{i} -  \frac{1}{i+1} \right) \frac{t - \tau_{i-1}}{\tau_{i} - \tau_{i-1}} = \frac{1}{i} - \left(  \frac{1}{i} -  \frac{1}{i+1} \right) = \frac{1}{i+1}
	\end{align*}
	and thus the process is continuous at $t = \tau_{i}$. For $t > \tau$, $Y_{t} = 0$ in a neighbourhood of $t$ and so continuity is trivial, so it only remains to check continuity for $t = \tau$.
\\

For this, we take a sequence of real numbers $(t_{n})_{n \in \NN}$, such that $t_{n} < \tau$ for all $n \in \NN$, and $t_{n} \xrightarrow{n \to \infty} \tau$. Then, for any fixed $n \in \NN$, there exists an $m_{0}(n) \in \NN$, such that, for all $m \geq m_{0}(n)$, we have that $t_{m} > \tau_{n}$. Thus, because $Y$ is decreasing and positive, we have that  for any $\eps > 0$, we find $n \in \NN$ such that $\frac{1}{n+1} < \eps$, and for all $m \geq m_{0}(n)$, we have
	\begin{align*}
		0 \leq Y_{t_{m}}	\leq Y_{\tau_{n}} = \frac{1}{n+1} < \eps
	\end{align*}
	which proves that $Y_{t_{n}} \xrightarrow{n \to \infty} 0 = Y_{\tau}$, which proves the continuity of $Y$ at $\tau$. 
\end{enumerate}

These results show that $(Y_{t})_{t \geq 0}$ satisfies all the path properties we want. However, one key complication is that $Y_{t}$ may not be $\mathcal{F}_{t}$ measurable, because the random variable $\frac{t - \tau_{i-1}}{\tau_{i} - \tau_{i-1}}$ is not necessarily in $\mathcal{F}_{t}$. We can remedy this by building a larger filtration $\mathcal{G} = (\mathcal{G}_{t})_{t \geq 0}$ that makes $Y_{t}$  $\mathcal{G}_{t}$-measurable. To this end, define
\begin{align*}
	\mathcal{G}_{t} = \mathcal{F}_{t} \vee \sigma(\tau_{i} - \tau_{i-1}) \quad \forall t \in [\tau_{i-1}, \tau_{i})
\end{align*}
which now satisfies that $Y_{t}$ is $\mathcal{G}_{t}$-measurable and, because $\mathcal{F}_{t} \subseteq \mathcal{G}_{t}$, $\tau$ is still predictable and it is announced by the same sequence $(\tau_{n})_{n \in \NN}$. This process and filtration prove the statement of the theorem.
\\

For the general case, we define the following subsequence $(n_{k})_{k \in \NN}$. We set $n_{0} = 0$, $n_{1} = 1$, and then
\begin{align*}
	n_{i} := \inf \left\{ k > n_{i-1} \text{ : } \tau_{n_{i}} > \tau_{n_{i-1}} \right\}
\end{align*}
which, because of properties \ref{psttwo} and \ref{pstthree}, exists and satisfies
\begin{align*}
	\tau_{n_{k}} \xrightarrow{k \to \infty} \tau
\end{align*}

Then, we reduce the general case to the previous case where $\tau_{n-1} <  \tau_{n}$ for all $n$, by considering the subsequence $(\tau_{n_{k}})_{k \in \NN}$ that also announces $\tau$, and defining in this case
\begin{align*}
	Y_{t} = \sum_{i=1}^{\infty} 1_{[\tau_{n_{i-1}}, \tau_{n_{i}})}(t) \left( \frac{1}{i} - \left(  \frac{1}{i} -  \frac{1}{i+1} \right) \frac{t - \tau_{n_{i-1}}}{\tau_{n_{i}} - \tau_{n_{i-1}}} \right)
\end{align*}
which is still $\mathcal{G}_{t}$-measurable, and satisfies all the conditions of the previously defined $Y$.
\end{proof}

\begin{remarks} 
\begin{itemize}
	\item[]
	\item We can see this last result as a parallel to the previous ones regarding totally inaccessible stopping times, in the sense that we can describe predictable stopping times as hitting times of adapted continuous processes, while totally inaccessible stopping times are jump times of discontinuous adapted processes, specifically Markov. However, it is easy to see from our construction that $Y$ is not Markov for either $\left(\mathcal{F}_{t} \right)_{t \geq 0}$ or $\left(\mathcal{G}_{t} \right)_{t \geq 0}$, and so finding a process that satisfies the conditions of \cref{hittingprocesspst}, but is also Markov, is still an open problem.
	\item The enlargement of the filtration we use for the construction of the process $Y$ is not as extensive as it looks, because even though we are adding a random variable to each interval to make everything measurable, we see that  $(\tau_{i} - \tau_{i-1}) \in \mathcal{F}_{\tau_{i}}$, and so in reality we are only performing an initial enlargement with one specific random variable at each step of enlargement, which is the next time in the predicting sequence of $\tau$. Note as well that this also shows that we are not adding $\tau$ itself in any step, which would indeed make that problem trivial. 
	\item Although we are again not aware of any similar result in the literature, there is a different proof of this result in Lemma 3 of the post Predictable Stopping Times in the blog Almost Sure: a Random Mathematical Blog (Lowther, 2009). The proof presented there has the advantage that the continuous process they construct is adapted to the original filtration of the stopping times, without the need to define $\left(\mathcal{G}_{t} \right)_{t \geq 0}$; nonetheless, it is a less explicit construction and again it is not Markov, so the open question presented above remains. 
\end{itemize}
\end{remarks}

\begingroup
	\large
		 \textbf{Statements and Declarations}
\endgroup
\\

Philip Protter is partially funded by the NSF grant DMS-2106433, together with his salary as a professor at Columbia University. Andr\'es Riveros V. is a PhD student at Columbia University, receiving a scholarship and a salary. Neither author has any non-financial conflicts of interest.

\bibliographystyle{abbrv}
\bibliography{stochfin}

\newcommand{\dummy}[1]{}
\begin{thebibliography}{10}

\bibitem{Azema.72}
J.~Azéma.
\newblock Quelques applications de la théorie générale des processus. i.
\newblock {\em Inventiones Mathematicae}, 18:293--336, 1972.

\bibitem{ChungWalsh.05}
K.~Chung and J.~Walsh.
\newblock {\em Markov Processes, Brownian Motion, and Time Symmetry}.
\newblock Grundlehren der Mathematischen Wissenschaften 249. Springer, New
  York, 2005.

\bibitem{Dellacherie.72}
C.~Dellacherie.
\newblock {\em Capacités et Processus Stochastiques}.
\newblock Ergebnisse der Mathematik und ihrer Grenzgebiete. 2. Folge. Springer
  Berlin, Heidelberg, 1972.

\bibitem{GueyeJeanblanc.22}
D.~Gueye and M.~Jeanblanc.
\newblock Generalized {Cox} model for default times.
\newblock {\em Frontiers of Mathematical Finance}, 1(4):467--489, 2022.

\bibitem{GuoZeng.08}
X.~Guo and Y.~Zeng.
\newblock Intensity process and compensator: A new filtration expansion
  approach and the {Jeulin-Yor} theorem.
\newblock {\em Annals of Applied Probability}, 18(1):120--142, 2008.

\bibitem{JansonM'BayeProtter.11}
S.~Janson, S.~M'Baye, and P.~Protter.
\newblock Absolutely continuous compensators.
\newblock {\em International Journal of Theoretical and Applied Finance},
  14(3):335--351, 2011.

\bibitem{Lando.98}
D.~Lando.
\newblock On cox processes and credit risky securities.
\newblock {\em Review of Derivatives Research}, 2:99--120, 1998.

\bibitem{Lando.04}
D.~Lando.
\newblock {\em Credit Risk Modeling : Theory and Applications}.
\newblock Princeton Series in Finance. Princeton University Press, 2004.

\bibitem{Nikeghbali.06}
A.~Nikeghbali.
\newblock An essay on the general theory of stochastic processes.
\newblock {\em Probability Surveys}, 3:345--412, 2006.

\bibitem{Protter.05}
P.~Protter.
\newblock {\em Stochastic Integration and Differential Equations}.
\newblock Springer, New York, 2nd edition, 2005.

\bibitem{Protter.15}
P.~Protter.
\newblock Strict local martingales with jumps.
\newblock {\em Stochastic Processes and their Applications}, 125(4):1352--1367,
  2015.

\bibitem{ProtterQuintos.21}
P.~Protter and A.~Quintos.
\newblock Dependent stopping times.
\newblock {\em Preprint SSRN}, 2021.
\newblock Available at SSRN: https://ssrn.com/abstract=3965970.

\end{thebibliography}

\end{document}